\pgfplotsset{compat=1.11}
\newtheorem{conjecture}{Conjecture}
\newtheorem{definition}{Definition}
\newtheorem{observation}{Observation}
\newtheorem{theorem}{Theorem}
\newtheorem{lemma}{Lemma}
\newtheorem{corollary}{Corollary}
\newtheorem{proposition}{Proposition}
\providecommand{\subjclass}[1]{\textbf{2010 AMS Subject Class:} #1}
\providecommand{\keywords}[1]{\textbf{Keywords:} #1}
\title{Dominator Chromatic Numbers of Orientations of Trees}
\author[1]{Michael Cary\footnote{macary@mix.wvu.edu}}
\affil[1]{West Virginia University}
\begin{document}
\maketitle
\begin{abstract}
In this paper we prove that the dominator chromatic number of every oriented tree is invariant under reversal of orientation. In addition to this marquee result, we also prove the exact dominator chromatic number for arborescences and anti-arborescences as well as bounds on other orientations of oft studied tree topologies including generalized stars and caterpillars.
\end{abstract}

\subjclass{05C69, 05C20, 05C15}

\keywords{dominating set, dominator coloring, dominator chromatic number, tree, arborescence}

\section{Introduction}
A dominating set of a graph $G=(V,E)$ is a subset $S\subseteq V$ such that every vertex is either adjacent to a vertex in $S$ or is in $S$ itself, i.e., $V\setminus S\subseteq N(S)$, or, alternatively, $S\cup N(S)=V$, where $N(S)$ is the open neighborhood of $S$. This notion can be extended to digraphs $D=(V,A)$ through out-neighborhoods by finding a subset $S\subseteq V$ such that every vertex is in either $N^{+}(S)$ or in $S$ itself, i.e., $S\cup N^{+}(S)=V$, where $N^{+}(S)$ is the open out-neighborhood of $S$.

Domination and dominating set problems come in a variety of flavors, and date back at least to a chess problem found in \cite{de1862traite} about which positions a queen can dominate on a chessboard. Since then, many applications of domination and dominating sets in graphs and networks have been discovered. Results on graph separability using dominating sets were obtained in \cite{chiarelli2019linear}. Progress on binary locating sets for vertices in polytopes occurred in \cite{simic2017binary}. In \cite{saygi2019domination} bounds were found for the dominator number of hypercube networks, in particular for Fibonacci cubes.

Characterizing these sets and their potential applications are interesting and important problems. For example, while every graph with no isolated vertex admits at least two disjoint dominating sets, this is not true in general for total dominating sets.
In fact, a recent paper by Henning and Peterin \cite{henning2019characterization} characterized graphs with two disjoint total dominating sets. But typical applications of dominating sets involve characterizing the vertices of dominating sets, not the dominating sets themselves. For this purpose, an interesting and relatively new concept was developed, that of dominator colorings.

A dominator coloring of a graph is a proper vertex coloring in which every vertex dominates at least one color class. The dominator chromatic number of a graph is the size of a minimum dominator coloring. Dominator colorings and dominator chromatic numbers of graphs were first introduced in papers by Dr. Gera \cite{gera2006,gera2007}. Since that initial work, dominator colorings have been studied intensively in specific graph families including bipartite graphs \cite{gera2007bi}, trees \cite{merouane2012}, and certain Cartesian products \cite{chen2017dominator}. More general results were obtained in \cite{arumugam2012dominator} and \cite{harary1996}. Algorithmic results were first obtained in \cite{arumugam2011algorithmic} and \cite{chang1998algorithmic} in a general setting as well as in \cite{merouane2015} in which a specific algorithm for finding minimum dominator colorings of trees was developed. These results led to the vast development of applications of dominating sets in undirected networks, including, e.g., \cite{blair2011movable,desormeaux2018distribution,haynes1998,haynes2002,haynes2003}. 
Recently the notion of dominator colorings was extended to directed graphs in \cite{cary2020dominator}. In that paper the focus was on finding the dominator chromatic number over all possible orientations of paths and cycles. In this paper, for the most part, we shift our attention from this perspective towards finding the exact dominator chromatic number of specific structures. We note that in \cite{cary2020linear} a linear algorithm for finding the minimum dominator coloring of an oriented path was developed and open source software implementing this algorithm was released. Before continuing, we provide a formal definition of a dominator coloring for a directed graph.

\begin{definition}\label{maindef}
A dominator coloring of a directed graph is a proper vertex coloring which further satisfies the requirement that every vertex dominates at least one color class in its out-neighborhood.
\end{definition}

Notice the importance of orientation here. If we did not specify that the dominator coloring was with respect to the out-neighborhood, then this problem would be analogous to that of dominator colorings of undirected graphs. This means that the definition of domination in a directed graph can be expressed in terms of the ordered pairs of vertices that comprise the arcs; for an arc $e=(u,v)$ we say that the vertex $u$ dominates the vertex $v$, but not vice versa.

All digraphs in this paper are orientations of simple, finite trees. One particular structure which will be a primary focal point in this paper is the arborescence. An arborescence, also known as an out-tree, is the orientation of a tree in which all arcs point away from a single source. Similarly, an anti-arborescence, or an in-tree, is the orientation of a tree in which all arcs point towards a single sink. As we will see, the dominator chromatic number of arborescences is invariant under reversal of orientation. 

This rather interesting results naturally begs to be generalized. Surprisingly yet beautifully, the marquee result in this paper is that the dominator chromatic number of an oriented tree $T$ is invariant under reversal for all oriented finite trees. 

Before continuing this work, we declare several important notations. Unless otherwise specified, the cardinality of the vertex set $V(T)$ of a given tree $T$ is denoted by $n$. The set of all leaves of $T$ is denoted by $l(T)$ and the number of leaves in a tree, i.e., $|l(T)|$, is denoted by $l$ when $T$ is known. For an oriented tree $T$, we will denote its reversal by $T^{-}$. The out-degree of a vertex $v$ is denoted by $d^{+}(v)$. Finally, we will use $\chi_{d}(D)$ to denote the dominator chromatic number of a given digraph $D$.

\section{Arborescences}
In this section we study arborescences, building up to a proof of the dominator chromatic number of any arborescence or anti-arborescence. First, however, we recap previous results on orientations of paths.

The simplest type of tree is a path. It is easy to see that the dominator chromatic number of the directed path $P_{n}$ is $n$ and that this result is trivially invariant under reversal. The following theorem from \cite{cary2020dominator} gives the dominator chromatic number over all orientations of paths.
\begin{theorem}\label{t1}
The minimum dominator chromatic number over all orientations of the path $P_{n}$ is given by
\begin{equation*}
\chi_{d}(P_{n})=\begin{cases}
k+2 & \mathrm{if}\ n=4k\\
k+2 & \mathrm{if}\ n=4k+1\\
k+3 & \mathrm{if}\ n=4k+2\\
k+3 & \mathrm{if}\ n=4k+3
\end{cases}
\end{equation*}
for $k\geq 1$ with the exception $\chi_{d}(P_{6})=3$.
\end{theorem}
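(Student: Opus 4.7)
The plan is to prove the theorem in two halves: an upper bound by explicit construction and a matching lower bound by structural analysis, with cases split according to $n \bmod 4$.

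For the upper bound I would exhibit, for each residue class of $n$ modulo four, an orientation of $P_n$ together with a dominator coloring attaining the claimed value. The base template I have in mind is the five-vertex orientation $v_1 \to v_2 \to v_3 \to v_4 \leftarrow v_5$ with coloring $v_2 \mapsto 1$, $v_4 \mapsto 2$, $v_1 = v_3 = v_5 \mapsto 3$: the singletons $\{v_2\}$ and $\{v_4\}$ absorb the domination requirement of every spine vertex through its unique out-neighbor, and the sink $v_4$ dominates its own singleton. From this base I would paste additional length-four blocks onto the path, each block contributing exactly one fresh singleton color while reusing the spine color, and handle the four residues uniformly so that tails of zero, one, two, or three vertices absorb into the construction with the prescribed number of extra colors. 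The exception $\chi_{d}(P_{6}) = 3$ is verified separately via a short tailored three-coloring of $P_6$.

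For the lower bound I would fix an arbitrary orientation $D$ of $P_n$ and any valid dominator coloring, then argue that at least the claimed number of colors must appear. Three structural facts drive the argument: every sink of $D$ must be a singleton color class, because the only class contained in $N^{+}[v] = \{v\}$ is $\{v\}$ itself; every vertex $v$ with a unique out-neighbor $u$ forces $v$ or $u$ to be a singleton, since the class $v$ dominates lies inside $\{v,u\}$; and every vertex of out-degree two can dominate a two-vertex color class formed by its two out-neighbors, but such a class saves at most one singleton within any consecutive window of four vertices. From these observations I would partition the path into blocks of four consecutive vertices and run an amortized charging argument showing each block contributes at least one color not shared with any other block, producing the $\lfloor n/4 \rfloor + 2$ or $\lfloor n/4 \rfloor + 3$ lower bound as appropriate.

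The hard part will be the lower bound, particularly controlling orientations that intersperse long straight runs with occasional local sources admitting two-vertex color classes. Bounding the savings achievable through such multi-vertex classes, and proving that each savings in one window is paid for by a compensating cost in an adjacent window, requires a careful local case analysis. Handling the exception $\chi_{d}(P_{6}) = 3$ separately and confirming that no analogous improvement is possible for any larger $n \equiv 2 \pmod 4$ forms the final technical wrinkle.
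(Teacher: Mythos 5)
First, note that the paper does not actually prove Theorem \ref{t1}; it imports it from \cite{cary2020dominator}, so there is no in-paper proof to compare against and your proposal must stand on its own. It does not, because it is built on the wrong domination convention. You allow a vertex to dominate a color class contained in its \emph{closed} out-neighborhood $N^{+}[v]$, so that a singleton vertex dominates itself. But the surrounding results are only consistent with the \emph{open} reading of Definition \ref{maindef}: a vertex with positive out-degree must contain an entire color class inside $N^{+}(v)$, and sinks are exempt (the paper's caterpillar lemma says explicitly that all out-degree-zero vertices may share one color). Under your closed reading the directed path $v_1\to v_2\to v_3\to v_4$ admits the dominator $3$-coloring $\{v_2\},\{v_4\},\{v_1,v_3\}$, contradicting $\chi_d(\vec{P_n})=n$; worse, a short analysis of the class-size profiles $(4,1,1)$, $(3,2,1)$, $(2,2,2)$ shows that \emph{no} orientation of $P_6$ is dominator $3$-colorable under the closed convention, so the exceptional value $\chi_d(P_6)=3$ would be false and the theorem unprovable. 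Under the correct open convention your base template fails: in $v_1\to v_2\to v_3\to v_4\leftarrow v_5$ with classes $\{v_2\},\{v_4\},\{v_1,v_3,v_5\}$, the vertex $v_2$ has $N^{+}(v_2)=\{v_3\}$ and the class of $v_3$ is not contained in it, so $v_2$ dominates nothing. Indeed any directed run $a\to b\to c$ forces $\{c\}$ (and, if $b$ is $a$'s only out-neighbor, $\{b\}$) to be a singleton class, which is why optimal orientations avoid runs of length three; the template you want alternates, e.g.\ $v_1\to v_2\leftarrow v_3\to v_4\leftarrow v_5\to v_6\leftarrow\cdots$, with interior sinks paired into two-element classes dominated by the source between them and all sources sharing one class.

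The lower bound inherits the mirror-image problem. Your first structural fact, that every sink must be a singleton class, is precisely the closed-convention statement; under the open convention sinks are exempt, and the fact is falsified by the theorem itself (the optimal orientation of $P_6$ has three sinks, two of which share a class). A charging argument built on it would prove $\chi_d(P_6)\geq 4$. Your second fact should instead read: a vertex whose out-neighborhood is the single vertex $u$ forces $\{u\}$, not ``$v$ or $u$,'' to be a singleton. Separately from the convention issue, the pasting step of the upper bound is underspecified: each new block of four cannot simply ``reuse the spine color,'' since three of its vertices must be absorbed into existing shared classes while remaining independent of their neighbors in the previous block; this is exactly why the formula carries an additive constant of $2$ (two reusable shared classes, one for sources and one for paired sinks) rather than $1$. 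The amortized four-vertex-window strategy for the lower bound is reasonable in outline, but it must be rebuilt from the correct local facts before it can be assessed.
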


Moving on from paths, especially when considering directed paths specifically, arborescences (and anti-arborescences) can be considered an immediate extension. As we build towards our proof that the dominator chromatic number of (anti-)arborescences is invariant under reversal of orientation, we first characterize the dominator chromatic number of (anti-)arborescences in terms of their parameters, namely the size of their vertex set and the number of leaves.

\begin{lemma}\label{l1}
Let $T$ be an in-tree (anti-arborescence). Then $\chi_{d}(T)=n-l+1$. 
\end{lemma}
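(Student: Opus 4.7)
The plan is to establish the equality $\chi_d(T) = n - l + 1$ by proving matching upper and lower bounds. For the upper bound, I will exhibit the explicit dominator coloring that assigns each of the $n - l$ non-leaf vertices a distinct color and colors all $l$ leaves with one additional common color, giving $n - l + 1$ colors in total. Properness is immediate, since the non-leaves receive pairwise distinct colors, any two leaves of a tree on $n \geq 3$ vertices are non-adjacent, and every leaf's unique neighbor is a non-leaf of a different color. The dominator condition follows from the observation that in an in-tree every non-root vertex $u$ has exactly one out-neighbor---its parent $p(u)$ in the in-tree---which is a non-leaf and hence forms a singleton color class; thus $u$ dominates that singleton through its out-neighborhood. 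The root is a non-leaf singleton, which fits the dominator condition under the convention used in Definition~\ref{maindef} and consistent with Theorem~\ref{t1}.

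For the lower bound, the key structural claim is that every non-leaf vertex must be alone in its color class. Fix a non-leaf $v$. Since $v$ has in-degree at least one in the in-tree, choose any in-neighbor $u$, so that $u \to v$. Then $u$ has at least one out-neighbor and is therefore non-root, and in an in-tree every non-root vertex has out-degree exactly one. Hence $N^{+}(u) = \{v\}$. The dominator condition applied to $u$ requires some color class contained in $N^{+}(u) = \{v\}$, which forces $\{v\}$ itself to be a color class; that is, $v$ is alone in its color class. This yields $n - l$ pairwise distinct singleton color classes, one per non-leaf. Since a leaf cannot share a color with any non-leaf without enlarging that non-leaf's class past a singleton, at least one additional color is needed for the leaves, so $\chi_d(T) \geq n - l + 1$. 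Together with the construction, this gives equality.

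The main subtlety---and the step I expect to require the most care---is the treatment of the root, which is the unique sink of $T$ and whose empty out-neighborhood makes the dominator condition delicate at that single vertex. Once this is handled under the convention consistent with Theorem~\ref{t1} (so that isolated sinks and singleton classes interact cleanly), the rest of the argument is essentially a short counting exercise driven by the singleton claim for non-leaves. A minor additional check is needed for the degenerate tree $P_2$, where the formula and the working definition of ``leaf'' in an in-tree must be reconciled so that the in-tree's source is the unique leaf.
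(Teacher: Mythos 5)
Your proof is correct and takes essentially the same approach as the paper's: color all leaves with one shared class and force each non-leaf into a singleton class via the fact that every vertex of an in-tree has out-degree at most one, so each non-leaf is the sole out-neighbor of some vertex. Your treatment is somewhat more careful than the paper's --- in particular your explicit handling of the root (whose empty out-neighborhood and possible degree~1 the paper glosses over) --- but the underlying argument is the same.
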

\begin{proof}
Since the set of leaves of $T$ form an independent set and since $d^{-}(v)=0$ for all leaves $v$ ($T$ is an in-tree), we may assign the set $l(T)$ to the same color class. It remains to be shown that every non-leaf vertex of $T$ must be uniquely colored. This follows from the fact that $d^{+}(v)\leq 1$ for all $v\in V(T)$ when $T$ is an in-tree, as this implies that each non-leaf vertex is the only vertex dominated by some other vertex in $T$ and therefore must be uniquely colored.
\end{proof}
\begin{lemma}\label{l2}
Let $T$ be an out-tree (arborescence). Then $\chi_{d}(T)=n-l+1$.
\end{lemma}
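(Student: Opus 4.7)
My plan is to establish $\chi_d(T) = n-l+1$ by matching lower and upper bounds of $n-l+1$. For the lower bound I would argue that in an out-tree $T$ every non-leaf vertex $u$ has a non-empty out-neighborhood consisting precisely of its children, and the dominator condition therefore forces $u$ to dominate some color class $C_u \subseteq N^{+}(u)$. Because the child sets of distinct internal vertices of an out-tree are disjoint (each non-root vertex has a unique parent), the collection $\{C_u\}$, as $u$ ranges over the $n-l$ internal vertices, consists of pairwise disjoint and hence pairwise distinct color classes. The root of $T$ is a child of no vertex, so it lies in none of these $C_u$ and contributes one further distinct color class, yielding $\chi_d(T) \geq n-l+1$.

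For the matching upper bound I would exhibit an explicit dominator coloring that uses exactly $n-l+1$ colors. Give the root a distinguished color, and for each internal vertex $u$ introduce a fresh color $c_u$ that is assigned to every child of $u$. Since every non-root vertex is a child of exactly one internal vertex, this rule assigns each vertex precisely one color and uses $1+(n-l)$ colors in total. The coloring is proper: siblings of a common parent are pairwise non-adjacent in $T$ and may safely share the color $c_u$, while any arc joins a parent (colored either with the root's color or with some $c_{p(u)}$) to a child (colored $c_u$), and the two colors always differ by construction. The dominator condition is also satisfied, because for each internal vertex $u$ the color class associated with $c_u$ equals exactly the children of $u$ and is therefore contained in $N^{+}(u)$; the leaves are handled exactly as in the proof of Lemma 1.

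The entire argument hinges on a single combinatorial observation — in an out-tree, the out-neighborhoods of distinct internal vertices are disjoint — and this is what simultaneously produces the $n-l$ distinct $C_u$'s in the lower bound and makes the explicit coloring of the upper bound well defined and proper. I do not anticipate a genuine obstacle; the only subtle bookkeeping point is accounting for the root, which, as the unique source of $T$, lies in none of the $C_u$'s and is therefore responsible for the extra ``$+1$'' in the formula. Note that the construction is structurally \emph{dual} to the one used for Lemma~\ref{l1}: there all leaves collapsed to a single color while each internal vertex was uniquely colored, whereas here each internal vertex is uniquely represented by a color class residing on its children.
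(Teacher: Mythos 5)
Your proof is correct, but it takes a genuinely different route from the paper. The paper argues by minimal counterexample: it deletes a leaf $v$ with in-neighbor $u$, applies the formula to $T\setminus\{v\}$, and extends the coloring by either reusing the color of $u$'s other children or giving $v$ a fresh color. You instead prove matching bounds directly: the lower bound comes from the observation that the out-neighborhoods (child sets) of the $n-l$ internal vertices are pairwise disjoint, so the color classes they are forced to dominate are pairwise distinct, with the root's own class supplying the extra ``$+1$'' since the root lies in no child set; the upper bound comes from the explicit coloring that gives each internal vertex's children a common fresh color. Your argument is self-contained and arguably more transparent about where the lower bound comes from --- the paper's induction really only makes the upper bound explicit and leans on the preceding Observation for the other direction --- at the cost of not fitting into the deletion-of-a-leaf framework the paper reuses later for Theorem~\ref{tmain}. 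Two small remarks: your closing appeal to Lemma~\ref{l1} for the leaves is not quite the right reference, since in an out-tree the leaves have out-degree zero and satisfy the dominator condition vacuously (in Lemma~\ref{l1} they have out-degree one and dominate their parent's class); and, like the paper, your count tacitly takes ``leaf'' to mean a vertex of out-degree zero, so that the root of a directed path is internal --- under the plain degree-one reading the stated formula would contradict $\chi_{d}(\vec{P_{n}})=n$.
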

\begin{proof}
The proof is by contradiction via minimum counterexample with respect to $n=|V(T)|$. Since a single vertex $v$ is technically an arborescence with one leaf and since $\chi_{d}(v)=1$, we may assume that some tree $T$ is a minimum counterexample to our claim. Let $|V(T)|=n$, let $v$ be a leaf of $T$ with in-neighbor $u$, and let $T^{\prime}=T\setminus\{v\}$. Since $|V(T^{\prime})|<|V(T)|$, it follows that $\chi_{d}(T^{\prime})=(n-1)-(l-1)+1=n-l+1$ if $v$ was not the only out-neighbor of $u$, and $\chi_{d}(T^{\prime})=(n-1)-l+1=n-l$ if $v$ was the only out-neighbor of $u$ in $T$. If $v$ was not the only out-neighbor of $u$, then we may color $v$ with the same color as the other out-neighbors of $u$, and if $v$ was the only out-neighbor of $u$ then we color $v$ uniquely. In either case we have established that $\chi_{d}(T)=n-l+1$ which contradicts our assumption that $T$ was a minimum counterexample and conclude that $\chi_{d}(T)=n-l+1$ for any arborescence $T$.
\end{proof}
\begin{corollary}\label{c1}
For any (anti-)arborescence $T$, we have that $\chi_{d}(T)=\chi_{d}(T^{-})$ where $T^{-}$ is $T$ with the orientation of every arc reversed.
\end{corollary}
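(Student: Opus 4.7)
The plan is to leverage Lemmas \ref{l1} and \ref{l2} directly, since together they give the exact value $n-l+1$ for both flavors of (anti-)arborescence. The corollary will therefore reduce to checking that the two parameters involved, namely $n$ and $l$, are unchanged under reversal, and that reversal sends arborescences to anti-arborescences and vice versa.

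First I would observe that if every arc of an arborescence $T$ is reversed, then all arcs of $T^-$ point toward the unique source of $T$, which is now a sink; hence $T^-$ is an anti-arborescence. Symmetrically, the reversal of an anti-arborescence is an arborescence. Thus $T$ and $T^-$ always fall under Lemmas \ref{l1} and \ref{l2} in complementary ways.

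Next I would verify invariance of $n$ and $l$. The vertex set is untouched by arc reversal, so $|V(T)|=|V(T^-)|=n$. The notion of a leaf used in Lemmas \ref{l1} and \ref{l2} is that of a degree-one vertex of the underlying undirected tree (which is reflected in the proofs through the conditions $d^-(v)=0$ and $d^+(v)=0$ respectively at the extremal vertices of the two orientations). Since arc reversal does not alter the underlying tree, we have $|l(T)|=|l(T^-)|=l$.

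Putting these pieces together, if $T$ is an arborescence then Lemma \ref{l2} gives $\chi_d(T)=n-l+1$, while Lemma \ref{l1} applied to the anti-arborescence $T^-$ gives $\chi_d(T^-)=n-l+1$; the anti-arborescence case is identical with the roles of the two lemmas swapped. I do not anticipate any serious obstacle: the entire content of the corollary is already packaged in the preceding two lemmas, and the only thing to articulate carefully is the sense in which ``leaf'' is orientation-independent.
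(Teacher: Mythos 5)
Your proposal is correct and matches the paper's (implicit) argument exactly: the corollary is stated without proof precisely because it follows immediately from Lemmas \ref{l1} and \ref{l2}, via the observations that reversal swaps arborescences with anti-arborescences and preserves both $n$ and $l$. Your care about the orientation-independence of ``leaf'' is reasonable; note only that for the lemmas to hold on, e.g., a directed path, the leaf count must exclude the root even when it has underlying degree one, but under either reading $l$ is preserved by reversal, so your reduction goes through.
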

Notice that Theorem \ref{t1} is a special case of both of the above lemmas and their corollary.

\section{Main Results}
In this section we prove our main result, that the dominator chromatic number of an oriented tree is invariant under reversal. To do this we will prove several necessary lemmas en route. First, however, we begin with an observation.

\begin{observation}
Let $T$ be an orientation of a tree and let $v$ be a leaf of $T$. Then $\chi_{d}(T)-1\leq\chi_{d}(T\setminus\{v\})\leq\chi_{d}(T)$.
\end{observation}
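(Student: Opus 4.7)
The plan is to prove the two inequalities separately by direct coloring constructions. The easier inequality $\chi_{d}(T) \leq \chi_{d}(T\setminus\{v\}) + 1$ follows from an extension argument, while the inequality $\chi_{d}(T\setminus\{v\}) \leq \chi_{d}(T)$ requires restricting a minimum coloring and then performing a small local repair, which I expect to be the main source of difficulty.

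For the extension direction, I start with any minimum dominator coloring of $T' := T\setminus\{v\}$ and extend it to $T$ by assigning $v$ a brand new color not used in $T'$. The extension is proper because $v$'s unique neighbor carries a different color. The color classes of the extended coloring are precisely the classes of $T'$ together with the new singleton $\{v\}$. For each $w \in V(T')$, the class $C$ that $w$ dominated in $T'$ is unchanged and satisfies $C \subseteq N^{+}_{T'}[w] \subseteq N^{+}_{T}[w]$, so $w$ still dominates $C$ in $T$; meanwhile $v$ dominates $\{v\}$. This coloring uses $\chi_{d}(T')+1$ colors.

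For the restriction direction, I take a minimum dominator coloring $c$ of $T$ using $k := \chi_{d}(T)$ colors and restrict it to $T'$. Properness is immediate. Any color class $C$ of $T$ with $v \notin C$ restricts to itself, and any class $C$ containing $v$ with $|C| \geq 2$ restricts to $C \setminus \{v\} \neq \emptyset$; in both cases a vertex $w$ that dominated $C$ in $T$ continues to dominate its restriction in $T'$. The only failure mode is when $v$ is uniquely colored in $T$ (so the class $\{v\}$ vanishes under restriction) and some $w \in V(T')$ was dominating only $\{v\}$ in $T$. Since $v$ is a leaf, the only candidate for such a $w$ is $v$'s unique neighbor $u$, and this can occur only when the incident arc is directed $(u,v)$; otherwise the restriction is already a valid dominator coloring of $T'$. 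In the remaining case I repair by recoloring $u$ with the now-unused color $c(v)$, making $u$ uniquely colored and therefore dominating $\{u\}$.

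The main obstacle is verifying that this repair is consistent without inflating the color count past $k$. Properness after the recoloring holds because $c(v)$ was used only by $v$ in $T$ and so is not borne by any $T'$-neighbor of $u$. The dominator property at any vertex $x$ that previously dominated $u$'s original color class is preserved because under the failure hypothesis $u$ was not uniquely colored in $T$ (otherwise $u$ would already have dominated $\{u\}$ in $T$, contradicting that its only dominated class was $\{v\}$), so $u$'s original class has at least two members and remains non-empty after $u$ is recolored. The total color count is unchanged, since $c(v)$ is merely reused and $u$'s old color persists through its other bearers, completing the bound $\chi_{d}(T\setminus\{v\}) \leq k$.
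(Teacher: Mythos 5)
Your two-directional strategy (extend a minimum coloring of $T\setminus\{v\}$ by one color; restrict a minimum coloring of $T$ and repair locally) is the right shape, and your identification of the unique failure mode in the restriction direction is correct. However, both of your repairs lean on the claim that a uniquely colored vertex dominates its own singleton class, i.e., on a closed out-neighborhood convention ($C\subseteq N^{+}[w]$). That convention is inconsistent with the rest of the paper: under it the directed path $P_{n}$ would admit a dominator coloring with roughly $n/2+1$ colors (make every second vertex a singleton and let the others dominate themselves), contradicting $\chi_{d}(P_{n})=n$, and an out-oriented star would need $m+1$ colors rather than $2$ as in Lemma \ref{l2}. In this paper a vertex of positive out-degree must dominate a color class contained in its \emph{open} out-neighborhood (sinks are exempt), so ``$v$ dominates $\{v\}$'' and ``$u$ \dots therefore dominating $\{u\}$'' are not available moves.

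Concretely, this breaks both halves. In the extension direction, if the arc is $(v,u)$ then the new leaf $v$ has $N^{+}(v)=\{u\}$ and needs $u$ to be uniquely colored, which a minimum dominator coloring of $T\setminus\{v\}$ need not provide; giving $v$ a fresh color does not make it dominate anything. The repair is to recolor $u$ with the one new color and give $v$ the color $u$ vacated: every vertex that formerly dominated $u$'s old class is an in-neighbor of $u$ and now dominates the new singleton $\{u\}$, and the total is still $\chi_{d}(T\setminus\{v\})+1$. In the restriction direction, when $u$'s only dominated class in $T$ was $\{v\}$, recoloring $u$ itself with $c(v)$ does nothing for $u$, which still needs a class inside $N^{+}_{T'}(u)=N^{+}_{T}(u)\setminus\{v\}$; instead, either $u$ has become a sink (and is exempt), or you should spend the freed color $c(v)$ on one of $u$'s surviving out-neighbors $x$ (note $x$ cannot already be uniquely colored, else $u$ dominated $\{x\}$ in $T$), creating a singleton class inside $N^{+}_{T'}(u)$ without increasing the count. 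The paper's own proof is a one-line appeal to the same extend/restrict idea, so your level of detail is welcome, but the repairs must be redone under the correct domination convention.
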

\begin{proof}
Clearly if the lower bound is false then we may find a smaller dominator chromatic number for $T$. The upper bound is obvious.
\end{proof}

Next, in order to prove our main theorem, we will need a more refined analysis of the subtree $T^{\prime}=T\setminus\{v\}$ for some leaf $v$ of $T$. We begin by first providing a characterization of the subtree $T^{\prime}$ when $\chi_{d}(T^{\prime})=\chi_{d}(T)-1$.

\begin{lemma}\label{lemT1}
Let $T$ be an orientation of a tree and let $v$ be a leaf of $T$ with neighbor $u$. Then $\chi_{d}(T\setminus\{v\})=\chi_{d}(T)-1$ if and only if either $\{v\}=N^{+}(u)$ or $\{v\}=\{x\in V(T)|d^{-}(x)=0\}$.
\end{lemma}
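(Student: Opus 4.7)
The plan is to handle the two directions by analyzing the orientation of the unique arc between $v$ and its neighbor $u$; throughout, I would use the preceding observation, which gives $\chi_{d}(T)-1 \leq \chi_{d}(T\setminus\{v\}) \leq \chi_{d}(T)$, so in every case it suffices to decide whether the lower bound is tight.

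For the forward direction ($\Leftarrow$), I would treat the two sufficient conditions in turn. If $N^{+}(u)=\{v\}$, then the arc between $u$ and $v$ is $u\to v$ and $u$'s only option for a dominated color class is the singleton $\{v\}$. Consequently $\{v\}$ must be its own color class in every dominator coloring of $T$; deleting $v$ from a minimum coloring removes exactly one color, and the restriction to $T\setminus\{v\}$ remains a valid dominator coloring because $u$ becomes a sink and has no further obligation, while no other vertex can have had $v$ in its dominated class, since $v$'s unique in-neighbor is $u$. If instead $v$ is the unique vertex with $d^{-}(v)=0$, the identity $\sum_{x} d^{-}(x)=n-1$ together with the requirement that every $x\neq v$ satisfy $d^{-}(x)\geq 1$ forces each non-$v$ vertex to have in-degree exactly one, so $T$ is the arborescence rooted at $v$ and $T\setminus\{v\}$ is the arborescence rooted at $u$. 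Lemma~\ref{l2} then applies to both: removing $v$ (which is not a sink) leaves the set of sinks unchanged for $n\geq 3$, with the two-vertex case verified by hand, so the formula $n-l+1$ immediately yields $\chi_{d}(T\setminus\{v\})=\chi_{d}(T)-1$.

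For the reverse direction ($\Rightarrow$), I would argue the contrapositive by constructing, from a minimum dominator coloring of $T\setminus\{v\}$, an extension to a dominator coloring of $T$ using no new colors. If the arc is $u\to v$ and $u$ has some other out-neighbor, let $D$ be the color class that $u$ dominates in $T\setminus\{v\}$ and assign $v$ the color of $D$; since each vertex of $D\subseteq N^{+}(u)$ has $u$ as its unique in-neighbor in the tree, no vertex other than $u$ was dominating $D$, so enlarging the class to $D\cup\{v\}$ preserves $u$'s dominator condition without disturbing any other, and properness holds because the color of $D$ differs from that of $u$. If the arc is $v\to u$ and some source $x\neq v$ also exists in $T$, I would exploit the underlying path from $x$ to $u$ to produce a minimum dominator coloring of $T\setminus\{v\}$ in which $u$ already sits in a singleton color class; assigning $v$ any existing color distinct from the color of $u$ then fulfils the requirement $\{u\}\subseteq N^{+}(v)$ for $v$'s dominator obligation without enlarging the palette. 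The hardest step is this last sub-case, since producing the singleton-$u$ coloring demands propagating singleton-forcing constraints from $x$ along the path toward $u$, and handling intermediate vertices whose in-degree exceeds one — so that the chain of forced singletons interacts with competing dominator requirements — is the delicate technical core of the argument.
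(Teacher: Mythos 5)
Your ($\Leftarrow$) direction is sound and in fact more careful than the paper's one-line justification: the restriction argument in the $N^{+}(u)=\{v\}$ case and the reduction to Lemma~\ref{l2} in the unique-source case both go through (modulo the paper's convention that ``leaves'' of an arborescence are its sinks). The genuine problems are in your ($\Rightarrow$) direction. In the sub-case where the arc is $u\to v$ and $|N^{+}(u)|>1$, your key claim that ``each vertex of $D\subseteq N^{+}(u)$ has $u$ as its unique in-neighbor in the tree'' is false: a vertex $w\in N^{+}(u)$ may have neighbors other than $u$, any of which may point into $w$. If such a vertex $z$ dominates only the class $D$, then enlarging $D$ to $D\cup\{v\}$ destroys $z$'s dominator condition, and your extension does not produce a dominator coloring of $T$. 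In the sub-case where the arc is $v\to u$, you explicitly defer the hard step --- producing a minimum dominator coloring of $T\setminus\{v\}$ in which $u$ is a singleton class --- and that step cannot in general be carried out.

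Both failures are unavoidable, because the implication you are trying to prove is false as stated. Take $T$ to be the oriented path $z\to w\leftarrow u\to v$. Here $v$ is a leaf with neighbor $u$, $N^{+}(u)=\{w,v\}\neq\{v\}$, and the sources of $T$ are $z$ and $u$, so neither condition of the lemma holds. Yet $\chi_{d}(T\setminus\{v\})=2$ (classes $\{z,u\}$ and $\{w\}$), while $\chi_{d}(T)=3$: the source $z$ forces $\{w\}$ to be a singleton color class, and the remaining vertices $z,u,v$ cannot share a single color because $u$ and $v$ are adjacent; this also agrees with Theorem~\ref{t1}, which gives $\chi_{d}\geq 3$ for every orientation of $P_{4}$. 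Hence $\chi_{d}(T\setminus\{v\})=\chi_{d}(T)-1$ although neither condition is satisfied. The path $v\to u\to w\leftarrow z$ gives the analogous counterexample for your $v\to u$ sub-case: there is no minimum dominator coloring of $u\to w\leftarrow z$ with $u$ in a singleton class. You should know that the paper's own ($\implies$) argument --- recolor a uniquely colored $v$ with $c(x)$ for some $x\in N^{+}(u)\setminus\{v\}$ --- breaks on the same example, since recoloring $v$ with $c(w)$ leaves $z$ dominating no class; so the defect here lies in the statement of the lemma, not merely in your write-up, and no amount of patching the extension argument will close the gap.
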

\begin{proof}
($\impliedby$) It is obvious that $v$ must be uniquely colored in any minimum dominator coloring of $T$ in either case.

($\implies$) Since $v$ is a leaf of $T$, $d^{+}(v)+d^{-}(v)=1$. If $d^{-}(v)=1$ it suffices to show that if $|N^{+}(u)|>1$ then $\chi_{d}(T\setminus\{v\})=\chi_{d}(T)$. This follows immediately by seeing that for any $x\in N^{+}(u)\setminus\{v\}$ it must be that if $v$ is uniquely colored, we may recolor $v$ with $c(x)$ thereby contradicting our assumption that our dominator coloring was a minimum dominator coloring. If $d^{+}(v)=1$ and there exists some other vertex $x$ such that $d^{-}(x)=0$, then if $v$ was uniquely colored, we may again color $v$ with $c(x)$ and contradict our assumption that our dominator coloring was a minimum dominator coloring of $T$.
\end{proof}

With that lemma intact, we further refine our understanding of the subtree $T^{\prime}=T\setminus\{v\}$ by studying the neighbor of the leaf vertex $v$.

\begin{lemma}\label{lemT2}
Let $T$ be an orientation of a tree with leaf $v$ satisfying $\chi_{d}(T\setminus\{v\})=\chi_{d}(T)-1$. If $d^{-}(v)=0$ then $d^{-}(u)=1$ where $u$ is the neighbor of $v$ in $T$.
\end{lemma}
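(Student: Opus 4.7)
The plan is to apply Lemma \ref{lemT1} to convert the hypothesis on dominator chromatic numbers into structural information about $T$, and then exploit the acyclicity of the underlying tree. First, I would invoke Lemma \ref{lemT1}: the assumption $\chi_{d}(T\setminus\{v\})=\chi_{d}(T)-1$ forces either $\{v\}=N^{+}(u)$ or $\{v\}=\{x\in V(T)\mid d^{-}(x)=0\}$. The first alternative requires $u\to v$, which would give $d^{-}(v)\geq 1$ and contradict the hypothesis $d^{-}(v)=0$. Hence the second alternative holds, so $v$ is the unique source of $T$ and the single arc incident to $v$ is $v\to u$; in particular $d^{-}(u)\geq 1$ already.

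Next, I would argue by contradiction that $d^{-}(u)\leq 1$. Suppose $d^{-}(u)\geq 2$ and pick $w\in N^{-}(u)$ with $w\neq v$. Because $v$ is the only source, every vertex other than $v$ has an in-neighbor, so starting from $w_{0}=w$ I can iteratively choose in-neighbors $w_{i+1}\in N^{-}(w_{i})$. This walk cannot backtrack, since in an oriented simple tree the two possible arcs between a pair of adjacent vertices cannot both be present; a non-backtracking walk in a tree is automatically a simple path, and since $T$ is finite the sequence must terminate at a source, which can only be $v$. Reversing, I obtain a directed path $P:\; v=w_{m}\to w_{m-1}\to\cdots\to w_{1}\to w$ in $T$.

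Now I would extract the contradiction. The key sub-claim is that $u\notin V(P)$: if $u=w_{i}$ for some $0\leq i\leq m$, then either the edge $\{u,w\}$ arising from the arc $w\to u$ and the portion of $P$ from $w$ to $u$ give two distinct underlying paths between $u$ and $w$, or a pair of oppositely oriented arcs would be forced between $u$ and $w$; both are impossible in an oriented tree. With $u\notin V(P)$, appending the arc $w\to u$ to $P$ yields a simple path in the underlying tree from $v$ to $u$ of length at least $2$. This coexists with the length-one tree-path provided by the arc $v\to u$, contradicting the uniqueness of paths in a tree. Therefore $d^{-}(u)=1$.

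The main obstacle is the bookkeeping of Step~2: making rigorous the claim that the backward chain of in-neighbors yields a bona fide simple directed path from $v$ to $w$ that avoids $u$. Both facts reduce to repeated invocations of the acyclicity of the underlying tree, and once they are in hand the contradiction is immediate.
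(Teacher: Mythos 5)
Your proof is correct, and it reaches the key structural fact by a slightly different route than the paper. Both arguments begin identically: Lemma \ref{lemT1} plus $d^{-}(v)=0$ rules out $\{v\}=N^{+}(u)$ and forces $v$ to be the unique source of $T$. From there the paper argues by iterated leaf-deletion: it notes every leaf $l\neq v$ has $d^{-}(l)=1$ (hence is a sink of its unique arc, so deleting it changes no remaining in-degree), strips such leaves repeatedly until only the arc $vu$ remains, and reads off $d^{-}(u)=1$. You instead walk backward along in-neighbors from a hypothetical second in-neighbor $w$ of $u$; since $v$ is the only source and backtracking is impossible in an orientation of a simple tree, this produces a directed path from $v$ to $w$, which together with the arcs $v\to u$ and $w\to u$ violates uniqueness of paths in the underlying tree. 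Your version is arguably more self-contained, since the paper's step ``if $T'$ has a leaf $l$ with $d^{-}(l)=0$ then $d^{-}_{T}(l)=0$'' silently relies on the deleted leaves being sinks. One small observation about your write-up: the sub-claim $u\notin V(P)$ is actually never realizable, because $v$ is a leaf whose only neighbor is $u$, so the penultimate vertex $w_{m-1}$ of $P$ must equal $u$; the contradiction therefore already falls out of your case analysis for $u=w_{i}$, and the subsequent ``two paths from $v$ to $u$'' step is redundant. Since both branches of your dichotomy yield contradictions under the assumption $d^{-}(u)\geq 2$, the logic is nonetheless sound.
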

\begin{proof}
Clearly $u$ cannot have any additional in-neighbors that are also leaves, else $v$ would not be uniquely colored in any minimum dominator coloring of $T$. Moreover, all leaves $l\neq v$ of $T$ must have $d^{-}(l)=1$, else $\{v\}\neq\{x\in V(T)|d^{-}(x)=0\}$ in which case Lemma \ref{lemT1} tells us that $\chi_{d}(T\setminus\{v\})\neq\chi_{d}(T)-1$, a contradiction.

Let $T^{\prime}$ be the subtree of $T$ obtained by deleting all leaves of $T$ except for $v$. If $T^{\prime}$ has a leaf $l$ with $d^{-}(l)=0$ then the vertex $l$ has $d^{-}(l)=0$ in $T$. By iterating this process until $T^{n}=vu$ we see that it must be the case that $d^{-}(u)=1$ in $T$.
\end{proof}

Finally we are ready to prove the main result of this paper, that the dominator chromatic number of an oriented tree is invariant under reversal.

\begin{theorem}\label{tmain}
Let $T$ be an orientation of a tree and let $T^{-}$ be $T$ with the orientation of every arc reversed. Then $\chi_{d}(T)=\chi_{d}(T^{-})$.
\end{theorem}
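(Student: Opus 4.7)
The plan is to proceed by strong induction on $n = |V(T)|$. The base case $n = 1$ is immediate: a single vertex equals its own reversal. For the inductive step, fix $T$ with $n > 1$ vertices and pick any leaf $v$ of $T$ with neighbor $u$. Since $T$ and $T^-$ share the same underlying tree, $v$ is also a leaf of $T^-$, and $(T \setminus \{v\})^- = T^- \setminus \{v\}$. The inductive hypothesis gives $\chi_{d}(T \setminus \{v\}) = \chi_{d}(T^- \setminus \{v\})$, and the preceding Observation forces each of $\chi_{d}(T) - \chi_{d}(T \setminus \{v\})$ and $\chi_{d}(T^-) - \chi_{d}(T^- \setminus \{v\})$ into $\{0, 1\}$. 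It therefore suffices to prove that these two differences are equal.

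Call $v$ \emph{pivotal} in $T$ when $\chi_{d}(T) = \chi_{d}(T \setminus \{v\}) + 1$, and analogously for $T^-$. The goal is to show $v$ is pivotal in $T$ if and only if $v$ is pivotal in $T^-$, by splitting on the orientation of the arc at $v$ and invoking Lemmas \ref{lemT1} and \ref{lemT2}. By Lemma \ref{lemT1}, $v$ is pivotal in $T$ iff $N^{+}_{T}(u) = \{v\}$ or $v$ is the unique source of $T$. Applying the same lemma to $T^-$ and translating the out-neighborhood in $T^-$ to the in-neighborhood in $T$, $v$ is pivotal in $T^-$ iff $N^{-}_{T}(u) = \{v\}$ or $v$ is the unique sink of $T$. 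When $v$ is a source leaf of $T$, the alternatives ``$N^{+}_{T}(u) = \{v\}$'' and ``$v$ unique sink'' are both vacuous, so the pivotal conditions collapse to ``$v$ unique source of $T$'' and ``$N^{-}_{T}(u) = \{v\}$''; Lemma \ref{lemT2} delivers the forward implication from the former to the latter. The case where $v$ is a sink leaf of $T$ is symmetric, using Lemma \ref{lemT2} applied to $T^-$, in which $v$ is a source leaf.

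The main obstacle will be the reverse implication in each case: passing from the local condition (e.g., $N^{-}_{T}(u) = \{v\}$) back to the global uniqueness condition (e.g., $v$ is the unique source of $T$). I plan to handle this by an iterative leaf-peeling argument in the style of the proof of Lemma \ref{lemT2}: strip leaves of $T$ other than $v$ one at a time, at each stage maintaining the hypothesis that $u$'s sole in-neighbor is $v$, and chain Lemma \ref{lemT2} to propagate the source structure outward along the tree. Acyclicity ensures that the process terminates in a configuration where uniqueness is immediate. Once the equivalence of the two pivotal conditions is in hand, the two differences above agree, and the inductive hypothesis yields $\chi_{d}(T) = \chi_{d}(T^-)$.
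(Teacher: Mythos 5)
Your overall skeleton---induction on $n$, deleting a leaf $v$ with neighbor $u$, and using the Observation to reduce the theorem to the claim that the drop $\chi_{d}(T)-\chi_{d}(T\setminus\{v\})$ equals the corresponding drop for $T^{-}$---is exactly the paper's frame, and that reduction is sound. The gap is in the step you yourself flag as ``the main obstacle'' and propose to settle by leaf-peeling: the reverse implication from $N^{-}_{T}(u)=\{v\}$ back to ``$v$ is the unique source of $T$'' is false, so no peeling argument can establish it. Take $T$ to be the path on four vertices oriented $v\to u\to w\leftarrow z$. Here $v$ is a source leaf whose neighbor $u$ has $N^{-}_{T}(u)=\{v\}$, yet $z$ is a second source. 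Worse, this example also shows that the characterization of pivotality you import from Lemma \ref{lemT1} cannot be taken at face value: by Theorem \ref{t1} every orientation of $P_{4}$ has $\chi_{d}\geq 3$ (and $3$ is achieved here by the classes $\{v,z\},\{u\},\{w\}$), while $T\setminus\{v\}$ is the in-star $u\to w\leftarrow z$ with $\chi_{d}=2$; so $v$ \emph{is} pivotal in $T$ even though neither $\{v\}=N^{+}(u)$ nor ``$v$ is the unique source'' holds. The mechanism is that deleting $v$ releases $u$ from having to be uniquely colored (it was the sole out-neighbor of $v$), letting $u$ merge into $z$'s class---a saving not attributable to $v$'s own color, and one the ``only if'' direction of Lemma \ref{lemT1} does not account for. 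So your proposed equivalence of the two local/global conditions is not just unproven; as stated it is not true, and chaining Lemma \ref{lemT2} outward cannot repair it.

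For comparison, the paper's proof never attempts this equivalence. In the troublesome subcase---$v$ a source leaf that is not pivotal in $T$ (so a second source $x$ exists) while $u$ has $v$ as its only in-neighbor---it switches attention to $x$: it either colors $v$ with $c(x)$ directly, or deletes $x$ (or another suitable leaf), applies the induction hypothesis to $T\setminus\{x\}$, and explicitly rebuilds a dominator coloring of $T^{-}$ with $\chi_{d}(T)$ colors. To salvage your plan you would need either a corrected characterization of when $\chi_{d}(T\setminus\{v\})=\chi_{d}(T)-1$ (one that accounts for savings at $u$ rather than at $v$), or a direct recoloring argument of that kind in the subcases where the two local conditions disagree.
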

\begin{proof}
Let $T$ be a minimum counterexample, let $v$ be a leaf of $T$ with neighbor $u$, and let $T^{\prime}=T\setminus\{v\}$. We prove this by considering the cases where $\chi_{d}(T^{\prime})=\chi_{d}(T)-1$ and where $\chi_{d}(T^{\prime})=\chi_{d}(T)$. Within each case we will consider the two possible subcases, where $d^{-}(v)=0$ and where $d^{+}(v)=0$.

First, assume that $\chi_{d}(T^{\prime})=\chi_{d}(T)-1$. If $d^{-}(v)=0$ then by Lemma \ref{lemT1} we know that $\{v\}=\{x\in V(T)|d^{-}(x)=0\}$. This implies that $d^{-}(u)=1$ by Lemma \ref{lemT2}. Thus $d^{+}_{T^{-}}(u)=1$ and $v$ must be uniquely colored and so $\chi_{d}(T^{\prime})=\chi_{d}(T)$.

If $d^{+}(v)=0$ then $d^{+}(u)=1$ else $v$ would not be uniquely colored in $T$. Thus $d^{-}(u)=1$ in $T^{\prime-}$. If $u$ is not uniquely colored in $T^{\prime-}$, then we may uniquely recolor $u$ and assign to $v$ the original color had by $u$ in $T^{\prime-}$. Since $d^{-}(u)=0$ in $T^{\prime-}$, if there exists $x\in V(T^{\prime-})$ such that $c(x)=c(u)$, then this color class is not dominated by any vertex. Furthermore, if there exists any non-dominated color classes in $T^{\prime-}$ then $u$ must belong to such a class, else our dominator coloring of $T^{\prime-}$ is not minimal. Therefore, if $u$ is uniquely colored in a minimum dominator coloring of $T^{\prime-}$, there does not exist any non-dominated color class besides $c(u)$ in $T^{\prime-}$. This implies that $\{u\}=\{x\in V(T^{\prime-})|d^{-}(u)=0\}$ which implies that $v$ must be uniquely colored in order to have a proper dominator coloring of $T^{-}$, hence $\chi_{d}(T^{-})=\chi_{d}(T)$.

Now assume that $\chi_{d}(T^{\prime})=\chi_{d}(T)$. If $d^{+}(v)=0$ then $\exists\ x\in N^{+}(u)$ such that $x\neq v$ since $v$ is not uniquely colored in any minimum dominator coloring of $T$. Since $|V(T^{\prime})|<|V(T)|$ it follows that $\chi_{d}(T^{\prime-})=\chi_{d}(T^{\prime})=\chi_{d}(T)$. Since $xu\in A(T^{\prime}-)$ and $d^{+}(x)=1$ in $T^{\prime}-$, it follows that $u$ is uniquely colored in any minimum dominator coloring of $T^{\prime}-$. We may then add $v$ to $T^{\prime-}$ and color it with $c(x)$ to establish that $\chi_{d}(T^{-})=\chi_{d}(T)$.

If $d^{-}(v)=0$ then $\exists\ x\neq v$ such that $d^{-}(x)=0$ by Lemma \ref{lemT1}. If such a vertex is also an in-neighbor of $u$ in $T$ then we may color $v$ with $c(x)$ and be done, so we may assume that $\{v\}=N^{+}(u)$ in $T^{\prime-}$. Since $|V(T^{\prime})|<|V(T)|$, it follows that $\chi_{d}(T^{\prime-})=\chi_{d}(T^{\prime})$. Since $\{v\}=N^{-}(u)$ in $T$, it follows that $d^{-}(u)=0$ in $T^{-}$. This mean that $|\{w\in V(T^{\prime})|d^{-}(w)=0\}|\geq 2$. If $x$ is a leaf in $T$, then $x$ cannot be the only in-neighbor of its out-neighbor in $T^{\prime}$, else $x$ would be uniquely colored in any minimum dominator coloring of $T^{\prime}$ and we could consider the tree $T\setminus\{x\}$ which satisfies $\chi_{d}(T\setminus\{x\})<\chi_{d}(T)$ and use the first case to complete the proof. Therefore we may assume that $\chi_{d}(T^{\prime\prime-})=\chi_{d}(T^{\prime\prime})$ where $T^{\prime\prime}=T\setminus\{x\}$. Let $w$ be the in-neighbor of $x$ in $T^{-}$. Since $x$ is not the only member of $N^{+}(w)$, $w$ already dominates a color class in any minimum dominator coloring of $T^{\prime\prime-}$, all one such color class $\hat{c}$. By coloring $x$ with $\hat{c}$ in a given minimum dominator coloring of $T^{-}$, we establish that if $x$ is a leaf in $T$, that $\chi_{d}(T^{-})=\chi_{d}(T)$.

Finally, assume that there is no such leaf vertex in $T$, i.e., that for all leaves $l\neq v$ of $T$, $d^{-}(l)=1$. If there exists some leaf $l\neq v$ such that $\{l\}=N^{+}(N^{-}(l))$, the second subcase of the first case completes the proof. Therefore, we may assume that for every leaf $l\neq v$ (which there necessarily exists at least one such leaf) we have that there exists some other leaf $l^{\prime}$ such that $l^{\prime}$ is also a member of $N^{+}(N^{-}(l))$. Let $T^{\prime\prime}=T\setminus\{l\}$. Since $|V(T^{\prime\prime})|<|V(T)|$, it follows that $\chi_{d}(T^{\prime\prime-})=\chi_{d}(T^{\prime\prime})$. We may color $l$ with $c(l^{\prime})$ for some $l^{\prime}\in N^{-}(N^{
+}(l))$ in $T^{-}$, establishing that $\chi_{d}(T^{-})=\chi_{d}(T)$. As this concludes the last possible case, the proof is complete.
\end{proof}

\section{Applications}
In this section we use our main result, that the dominator chromatic number of an orientation of a tree is invariant under reversal of orientation, as a tool to help prove results on other oft studied tree topologies including generalized stars and caterpillars.

We begin by studying orientations of generalized stars. In some sense a generalized star could be construed to mean a union of paths of varying lengths that all meet at a single common vertex. However, for the sake of this paper we consider generalized stars $GS_{m}^{k}$ which consist of $m$ paths consisting of $k$ edges, all originating from a single, common vertex. Note that the traditional star graph $S_{m}$ is also $GS_{m}^{1}$. An illustration of $GS_{8}^{2}$ is given below for reference.

\begin{figure}[h!]
\centering
\begin{tikzpicture}[-,>=stealth',shorten >=1pt,auto,node distance=2cm,
                    thick,main node/.style={circle,draw}]
  \node[main node] (A)					      {};
  \node[main node] (B) [above of=A]		      {};
  \node[main node] (C) [above of=B]      	  {};
  \node[main node] (D) [above right of=A] 	  {};
  \node[main node] (E) [above right of=D]     {};
  \node[main node] (F) [above left of=A]      {};  
  \node[main node] (G) [above left of=F]      {};    
  \node[main node] (H) [below left of=A]      {};    
  \node[main node] (I) [below left of=H]      {};  
  \node[main node] (J) [below of=A]           {};    
  \node[main node] (K) [below of=J]           {};    
  \node[main node] (L) [below right of=A]     {};    
  \node[main node] (M) [below right of=L]     {};
  \node[main node] (N) [left of=A]            {};
  \node[main node] (O) [left of=N]            {};
  \node[main node] (P) [right of=A]           {};
  \node[main node] (Q) [right of=P]           {};
  \draw[thick,-] (A) to (B);
  \draw[thick,-] (B) to (C);
  \draw[thick,-] (A) to (D);
  \draw[thick,-] (D) to (E);
  \draw[thick,-] (A) to (F);
  \draw[thick,-] (F) to (G);
  \draw[thick,-] (A) to (H);
  \draw[thick,-] (H) to (I);
  \draw[thick,-] (A) to (J);
  \draw[thick,-] (J) to (K);
  \draw[thick,-] (A) to (L);
  \draw[thick,-] (L) to (M);
  \draw[thick,-] (A) to (N);
  \draw[thick,-] (N) to (O);
  \draw[thick,-] (A) to (P);
  \draw[thick,-] (P) to (Q);
\end{tikzpicture}
\caption{An example of an undirected generalized star $GS_{8}^{2}$. The central vertex is an end vertex of all eight paths.}
\label{f1}
\end{figure}
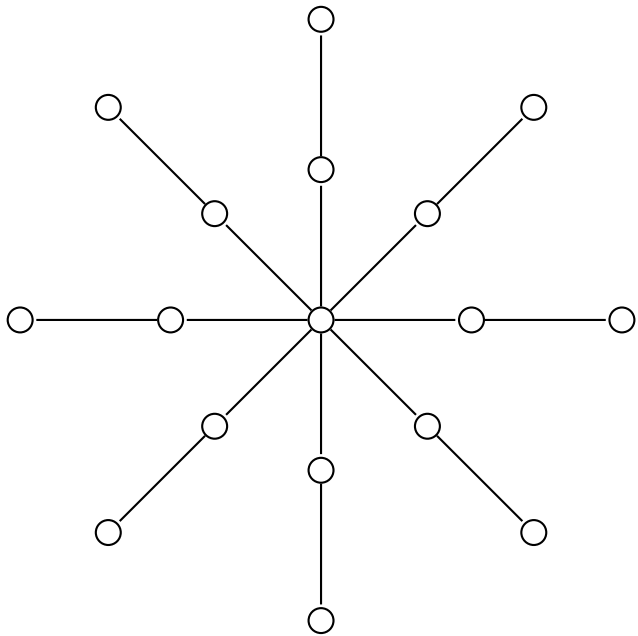

The following result from \cite{cary2020dominator} tells us the exact dominator chromatic number of a star for every possible orientation.

\begin{proposition}
Let $D$ be an orientation of a star graph, $G$. Then we have that $2\leq\chi_{d}(D)\leq3$, $\chi_{d}(D)=2$ if and only if all arcs are oriented similarly with respect to the central vertex, and $\chi_{d}(D)=3$ otherwise.
\end{proposition}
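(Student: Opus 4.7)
The plan is to handle three cases depending on the behaviour of the central vertex $c$: all arcs point away from $c$, all arcs point toward $c$, or the orientation is mixed. The lower bound $\chi_{d}(D)\geq 2$ is immediate because the underlying star has at least one edge and therefore requires at least two colors in any proper vertex coloring.

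If every arc is oriented similarly with respect to $c$, then $D$ is either an arborescence rooted at $c$ or an anti-arborescence with sink $c$. In both subcases we have $n=m+1$ vertices and $l=m$ leaves, so Lemma \ref{l2} and Lemma \ref{l1} respectively yield $\chi_{d}(D)=n-l+1=2$.

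The heart of the argument is the mixed case, where I would first establish $\chi_{d}(D)\geq 3$. Assume $c$ has at least one out-neighbor $v_{1}$ and at least one in-neighbor $v_{2}$. Any proper $2$-coloring of the underlying star must place $c$ in one class and every leaf in the other, since $c$ is adjacent to all leaves. Writing $C$ for the class containing the leaves, the out-neighborhood $N^{+}(c)$ is a proper subset of $C$ because it excludes $v_{2}$. Consequently no color class lies inside $N^{+}(c)$: the class $C$ is too large, and the singleton class $\{c\}$ is not contained in $N^{+}(c)$. Thus $c$ fails the dominator condition of Definition \ref{maindef}, and no proper $2$-coloring of $D$ can be a dominator coloring.

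To complete the proof, I would exhibit an explicit $3$-coloring in the mixed case: assign color $1$ to $c$, color $2$ to every vertex of $N^{+}(c)$, and color $3$ to every vertex of $N^{-}(c)$. This is proper because the leaves of a star form an independent set. Each in-neighbor of $c$ has $N^{+}(\ell)=\{c\}$ and therefore dominates the singleton class $\{c\}$; the vertex $c$ dominates the class of color $2$, which coincides with $N^{+}(c)$; and each out-neighbor of $c$ is a sink, handled in the same vacuous manner as the leaves in Lemma \ref{l2}. I do not anticipate any significant obstacle here; the only subtlety is verifying that the forced bipartition (center versus leaves) really is the unique proper $2$-coloring that needs to be ruled out in the mixed case, which is a one-line observation about proper colorings of stars.
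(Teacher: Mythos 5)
Your proof is correct and complete. Note that the paper does not actually prove this proposition --- it is imported verbatim from \cite{cary2020dominator} --- so there is no in-paper argument to compare against; what you have written is a valid self-contained proof. The two halves both check out: in the ``all arcs similar'' case the star is an (anti-)arborescence with $n=m+1$ and $l=m$, so Lemmas \ref{l1} and \ref{l2} give $\chi_{d}(D)=2$; in the mixed case your observation that any proper $2$-coloring of a star is forced to be $\{c\}\cup L$ is the right key step, and since $N^{+}(c)$ then omits the in-neighbor $v_{2}$, neither class sits inside $N^{+}(c)$ and $c$ cannot dominate, giving $\chi_{d}(D)\geq 3$; the explicit coloring $c\mapsto 1$, $N^{+}(c)\mapsto 2$, $N^{-}(c)\mapsto 3$ is proper and dominator, matching the paper's implicit convention that sinks satisfy the domination requirement vacuously (as in the treatment of leaves in Lemma \ref{l2}). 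The only loose end is the degenerate star with a single leaf, where the ``mixed'' case is vacuous and the formula $n-l+1$ needs the center not to be counted as a leaf; this does not affect the statement.
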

It is easy to see from this result that the dominator chromatic number of an oriented star is invariant under reversal. We can now attempt to expand this result by proving further results on the dominator chromatic number of generalized stars $GS_{m}^{k}$.

\begin{lemma}
Let $GS_{m}^{k}$ be an orientation of generalized star featuring either a single source or a single sink. Then $\chi_{d}(GS_{m}^{k})=m(k-1)+2$.
\end{lemma}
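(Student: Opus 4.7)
The plan is to recognize that the hypothesis \emph{a single source or a single sink} is really just a restatement of the (anti-)arborescence property for trees, so the lemma follows immediately from Lemma \ref{l1}, Lemma \ref{l2}, and some elementary counting.

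First I would establish the structural observation that an orientation of a tree with exactly one source (resp.\ sink) must be an arborescence (resp.\ anti-arborescence) rooted at that vertex. The argument is a standard degree count: any tree on $n$ vertices has $n-1$ arcs, so the total in-degree is $n-1$; if exactly one vertex has $d^{-}=0$, the remaining $n-1$ vertices carry total in-degree $n-1$, forcing $d^{-}(v)=1$ for each non-source $v$, which is precisely the definition of an arborescence rooted at the source. The sink case is dual. So $GS_{m}^{k}$ under the hypothesis is an (anti-)arborescence on the underlying generalized star.

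Next I would invoke either Lemma \ref{l2} (source case) or Lemma \ref{l1} (sink case) to conclude $\chi_{d}(GS_{m}^{k}) = n - l + 1$. Alternatively, one could appeal to Theorem \ref{tmain} to reduce the two cases to one, but the two lemmas already cover them directly and give the same formula. At this stage the proof is just arithmetic: $GS_{m}^{k}$ consists of a central vertex together with $m$ disjoint paths of $k$ edges attached to it, so $n = mk + 1$, and its undirected leaves are exactly the $m$ outer endpoints of the arms, giving $l = m$. Substituting yields
\[
\chi_{d}(GS_{m}^{k}) = (mk+1) - m + 1 = m(k-1) + 2,
\]
as claimed.

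There is no real obstacle here; the only mildly subtle point is verifying that \emph{single source} or \emph{single sink} forces the orientation to be globally an (anti-)arborescence, which is what unlocks Lemmas \ref{l1} and \ref{l2}. Once that observation is in hand, the lemma is essentially a corollary, and no new dominator-coloring argument is required beyond those already developed in Section 2.
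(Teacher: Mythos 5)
Your proof is correct and follows essentially the same route as the paper: identify the orientation as an (anti-)arborescence, apply Lemma \ref{l1} or \ref{l2}, and count $n=mk+1$ and $l=m$. The only difference is that you explicitly justify, via the in-degree count, that a single source (or sink) forces the arborescence structure, a step the paper leaves implicit.
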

\begin{proof}
This follows from Lemmas \ref{l1} and \ref{l2} and from Theorem \ref{tmain}. Since $GS_{m}^{k}$ has $m$ paths extending from its central vertex, it has $m$ leaves. Since $|V(GS_{m}^{k})|=n=mk+1$, it follows that $\chi_{d}(GS_{m}^{k})=mk-m+2=m(k-1)+2$.
\end{proof}

A natural question to ask would be whether or not this is best possible. It turns out that this is not the case. We improve greatly on this bound with our next result.

\begin{lemma}
For $k\geq 2$ we have that the minimum dominator chromatic number of all possible orientations of the generalized star satisfies $\chi_{d}(GS_{m}^{k})\leq 3+m(\lfloor\frac{k}{2}\rfloor-1)$.
\end{lemma}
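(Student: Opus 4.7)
The plan is to exhibit an explicit orientation of $GS_m^k$ together with a dominator coloring using exactly $3 + m(\lfloor k/2 \rfloor - 1)$ colors. Label the vertices on the $i$-th arm as $u_1^i, u_2^i, \ldots, u_k^i$, with $u_1^i$ adjacent to the center $c$ and $u_k^i$ the leaf. For each arm I orient the central edge inward as $u_1^i \to c$, and the internal edges so that every even-indexed interior vertex $u_{2j}^i$ is a local source, with both of its incident edges oriented outward from it. Under this orientation $c$ becomes a sink (every $u_1^i \to c$) and each interior odd-indexed vertex $u_{2j+1}^i$ for $1 \leq j \leq \lfloor k/2 \rfloor - 1$ becomes a sink with in-neighbors $u_{2j}^i$ and $u_{2j+2}^i$.

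I would then color as follows: give $c$ its own singleton color $A$; give each interior sink $u_{2j+1}^i$ its own singleton color, contributing $m(\lfloor k/2 \rfloor - 1)$ colors in total; and place the remaining vertices (namely the transition vertex $u_1^i$, the even-indexed sources, and the leaf when $k$ is even) into two shared classes $B$ and $C$ assigned in an alternating fashion along each arm. The fact that same-position vertices across different arms are non-adjacent (being separated by $c$) makes the shared classes consistent across arms, yielding a total of $1 + 2 + m(\lfloor k/2 \rfloor - 1) = 3 + m(\lfloor k/2 \rfloor - 1)$ colors.

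The key verifications are properness and the dominator condition. Properness is immediate because consecutive shared-color positions along any arm lie at distance at least two, and the center is the only point of contact between arms. For domination, each sink self-dominates its singleton class; each source $u_{2j}^i$ has both neighbors $u_{2j-1}^i$ and $u_{2j+1}^i$ as singleton sinks, so it dominates either of these singleton classes via the arc $u_{2j}^i \to u_{2j \pm 1}^i$; and the transition vertex $u_1^i$ dominates $\{c\}$ via the arc $u_1^i \to c$. The step I expect to be the main obstacle is handling the case where $k$ is odd, because then the leaf $u_k^i$ has odd index and the natural pattern would make it a sink, adding $m$ extra singletons and breaking the bound. I would address this by reversing the final leaf edge so that $u_k^i \to u_{k-1}^i$, making the leaf a source, and then adjusting the coloring near the leaf so that $u_k^i$ can dominate a singleton class already present in its out-neighborhood without introducing new colors; verifying this last adjustment is the only delicate piece of the construction.
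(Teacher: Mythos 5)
Your construction is, up to a phase shift of one layer, the same as the paper's: partition $GS_m^k$ into distance layers from the center, orient the edges so that the layers alternate between sources and sinks along each arm, give the interior sinks singleton colors so that each source finds a singleton class in its out-neighborhood, and let the sources (an independent set) share a constant number of colors, with the center serving as the one ``free'' singleton that the distance-one vertices dominate. The count $1+2+m(\lfloor k/2\rfloor-1)$ is the same as the paper's, so for even $k\geq 4$ your argument goes through essentially as the paper's does.

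The genuine problem is your handling of odd $k$, and it runs in exactly the wrong direction. If you reverse the pendant edge to make the leaf $u_k^i$ a source, its unique out-neighbour $u_{k-1}^i$ sits in one of the shared classes $B$ or $C$, each of which contains vertices from every arm; so $u_k^i$ cannot dominate any colour class unless $\{u_{k-1}^i\}$ is made a singleton on each arm, which costs the $m$ extra colours you were trying to avoid. No local recolouring ``near the leaf'' can rescue this, because a vertex of out-degree one can only dominate a class that is exactly its single out-neighbour. The correct move is the opposite one, and it is forced by the convention the paper uses throughout (see the proofs of Lemmas \ref{l1} and \ref{l2}, where all leaves of an arborescence are sinks sharing one colour): a vertex of out-degree zero is exempt from the domination requirement, so you should leave $u_{k-1}^i\to u_k^i$, let the leaf be a sink, and drop it into whichever shared class keeps the colouring proper at no extra cost. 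Your statement that ``each sink self-dominates its singleton class'' is the source of the confusion --- under Definition \ref{maindef} domination is through the out-neighbourhood, so a sink dominates nothing and needs to dominate nothing. Two further boundary cases need attention: for $k=2$ your rules do not orient the pendant edges at all (there are no even-indexed interior vertices), and only the choice $u_1^i\to u_2^i$ works; and for $k=3$ there are no interior singleton sinks, so $u_2^i$ has no singleton class in $N^+(u_2^i)=\{u_1^i,u_3^i\}$ and the construction fails --- though in fairness the paper's own construction has the same defect at $k=3$, and for $m=2$ the graph $GS_2^3$ is $P_7$, whose minimum dominator chromatic number is $4>3$ by Theorem \ref{t1}, so no construction can meet the stated bound there.
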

\begin{proof}
The proof is by construction. Begin by considering $GS_{m}^{2}$. Let $S_{0}$ be the central vertex of $GS_{m}^{2}$, let $S_{1}=N(S_{0})$, and let $S_{2}=N(S_{1})\setminus S_{0}$. Notice that we have simply created independent sets of ``layers" of the generalized star $GS_{m}^{2}$. We orient all arcs away from $S_{1}$ into $S_{0}$ and $S_{2}$. We may assign a single color to $S_{1}$ since no vertex has positive in-degree, and we may assign a single color to $S_{0}$ as it is a single vertex. Since every vertex in $S_{1}$ dominates, $S_{0}$, we may color all of $S_{2}$ with a third color, demonstrating that $\chi_{d}(GS_{m}^{2})\leq 3$.

We generalize this structure by coloring all vertices in each $S_{2i+1}$ with the same color used on $S_{1}$,  by coloring all vertices in each $S_{2i}$ with a unique color, and by orienting all arcs from $S_{2i+1}$ to $S_{2i}$. Notice that the union $\bigcup S_{2i+1}$ constitutes an independent set comprised of vertices with in-degree equal to zero, hence they may all share the color assigned to $S_{1}$. Since all vertices in $\bigcup S_{2i}$ have out-degree zero and are uniquely colored, every vertex in $\bigcup S_{2i+1}$ dominates some uniquely colored vertex. The result follows from basic counting.
\end{proof}

We next turn our attention to orientations of caterpillars. A caterpillar is a tree in which every vertex is at distance at most one from a central path. Our first result is a lemma relating the dominator chromatic number of this central path to the oriented caterpillar itself.

\begin{lemma}\label{lemlower}
Let $T$ be an orientation of a caterpillar and let $P\subseteq T$ be a longest path in $T$. Then $\chi_{d}(T)\geq\chi_{d}(P)$.
\end{lemma}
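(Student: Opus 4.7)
The plan is to proceed by induction on $|V(T)|$, leveraging the preceding observation (removing a leaf can decrease $\chi_d$ by at most one) together with the structural fact that a longest path in a caterpillar necessarily contains the entire spine.

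For the base case, when $|V(T)| = |V(P)|$, the inclusion $P \subseteq T$ together with $T$ being a tree (hence connected with $|V(T)| - 1$ edges) forces $T = P$, and the inequality is immediate.

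For the inductive step, suppose $|V(T)| > |V(P)|$ and pick any $u \in V(T) \setminus V(P)$. I would first invoke the structure of a caterpillar: any longest path traverses the entire spine and, when applicable, terminates in one leg at each end. Consequently every vertex outside $V(P)$ must be a leaf of $T$ attached to some spine vertex, so in particular $u$ is a leaf of $T$. Set $T' = T \setminus \{u\}$. Then $T'$ is still a caterpillar, $P \subseteq T'$, and $P$ remains a longest path in $T'$: any path in $T'$ is also a path in $T$ and so has length at most $|V(P)| - 1$, which $P$ itself attains. The inductive hypothesis applied to $T'$ yields $\chi_d(T') \geq \chi_d(P)$, and the observation gives $\chi_d(T) \geq \chi_d(T \setminus \{u\}) = \chi_d(T')$. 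Chaining these inequalities produces $\chi_d(T) \geq \chi_d(P)$.

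The main obstacle is the structural claim that every vertex of $V(T) \setminus V(P)$ is a leaf of $T$; this is exactly where the caterpillar hypothesis (as opposed to an arbitrary tree) is used, since in a general tree one cannot guarantee that non-path vertices are leaves. Once this claim is in place, the induction is a short application of the observation, and no direct manipulation of the coloring on $P$ is required.
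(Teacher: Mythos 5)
Your proof is correct, but it takes a genuinely different route from the paper. The paper's argument is a short direct one: it asserts that if $\chi_{d}(T)<\chi_{d}(P)$ then the extra vertices of $T$ would have to permit a merging of color classes of $P$, and that any such merging would already be available in $P$ alone, contradicting the minimality of $\chi_{d}(P)$; no deletion argument or induction appears, and the Observation is not invoked. Your approach instead peels off the vertices of $V(T)\setminus V(P)$ one leaf at a time, using the upper bound of the Observation ($\chi_{d}(T\setminus\{v\})\leq\chi_{d}(T)$) to chain down to $\chi_{d}(P)$. This is more mechanical and more verifiable than the paper's one-sentence argument, which leaves the ``must be possible without the addition of a new vertex'' step unjustified. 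Your structural claim --- that in a caterpillar every vertex off a longest path is a leaf --- is true (a longest path between two leaves must traverse the entire spine, and every non-spine vertex of a caterpillar is a leaf), and it makes the peeling order irrelevant. It is worth noting, though, that your argument needs less than this: for \emph{any} tree $T$ with longest path $P$ and $V(T)\neq V(P)$, some leaf of $T$ lies off $P$ (the only leaves of $T$ on $P$ are its two endpoints), and deleting it preserves $P$ as a longest path; so your induction actually proves the inequality for arbitrary oriented trees, not just caterpillars. The only caveat is that your proof inherits whatever imprecision lives in the Observation's ``obvious'' upper bound (restricting a coloring after deleting a uniquely colored leaf $v$ can leave the in-neighbor of $v$ with no dominated class, so that bound itself deserves a word of justification), but since you are entitled to cite the Observation as stated, that is the paper's debt rather than yours.
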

\begin{proof}
In order for $\chi_{d}(T)$ to be less than $\chi_{d}(P)$ the addition of a vertex to $P$ must allow for the combination of existing color classes in $P$, but if this is possible then it must be possible without the addition of a new vertex which contradicts $P$ using fewest possible colors.
\end{proof}

Next we bound the dominator chromatic number of an oriented caterpillar by a function of the length of its central (longest) path.

\begin{lemma}\label{lemupper}
Let $T$ be an orientation of a caterpillar and let $P\subseteq T$ be its central path with length $m$. Then $\chi_{d}(T)\leq 2m-1$.
\end{lemma}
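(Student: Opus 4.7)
The plan is to exhibit a dominator coloring of $T$ using at most $2m-1$ colors via a direct construction. Write the central path as $P = p_0 p_1 \cdots p_m$; since $P$ is a longest path in $T$, the endpoints $p_0$ and $p_m$ are themselves leaves of $T$, and every additional leaf of $T$ is attached to some internal $p_i$ with $1 \leq i \leq m-1$.

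The palette I would use consists of three groups: a primary color $c_i$ assigned as a singleton to each internal $p_i$ (a total of $m-1$ colors), one shared color $c_0$ for a large independent class of leaves, and up to $m-1$ backup colors $c_1', \ldots, c_{m-1}'$, one reserved per internal index but used only when the corresponding $p_i$ requires it. This gives a palette of at most $(m-1) + 1 + (m-1) = 2m - 1$ colors. The assignment would proceed as follows. Each internal $p_i$ receives $c_i$. The slot $c_i'$ is filled only if $p_i$ has no outward $P$-arc to another internal vertex: in that situation, if $p_i$ has at least one sink-leaf, all sink-leaves of $p_i$ are placed into the class $c_i'$; otherwise, if the only outward $P$-neighbor of $p_i$ is an endpoint $p_0$ or $p_m$, that endpoint is pulled out of $c_0$ and placed alone in $c_i'$. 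Every vertex not otherwise assigned, namely $p_0, p_m$ when not reassigned, all source-leaves, and any sink-leaves of internal vertices whose backup slots were left unused, is placed into $c_0$.

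Correctness splits into properness and the dominator condition. Properness is routine: $c_0$ contains only leaves of $T$ attached at distinct points of $P$ (or the endpoints themselves), so these vertices form an independent set; each $c_i$ class is a singleton; and each $c_i'$ class is either a single endpoint or a set of sink-leaves sharing the common neighbor $p_i$, hence also independent. For the dominator condition, sinks are vacuously fine, source-leaves and leaf endpoints oriented as sources dominate the singleton class $\{p_j\}$ of their internal out-neighbor on $P$, an internal $p_i$ with an outward $P$-arc to another internal vertex $p_j$ dominates $\{p_j\}$ directly, and every remaining non-sink internal $p_i$ dominates its $c_i'$ class, which by construction lies entirely in $N^+(p_i)$.

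The main obstacle is the behavior at the two endpoints of $P$: since $p_0$ and $p_m$ reside by default inside the shared class $c_0$ and are therefore not singletons, an internal $p_i$ whose only outward $P$-neighbor is an endpoint cannot dominate $\{p_0\}$ or $\{p_m\}$ directly. The key trick is to let the backup slot $c_i'$ do double duty, holding the sink-leaves of $p_i$ when they exist and otherwise isolating the troublesome endpoint. Because each internal index contributes at most one backup color, the total never exceeds $(m-1) + 1 + (m-1) = 2m - 1$, giving the claimed bound.
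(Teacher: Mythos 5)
Your construction is correct and follows essentially the same route as the paper's proof: (essentially) unique colors along the spine, one shared class for the in-degree-zero off-path leaves (which dominate their uniquely colored attachment vertex), and one class per internal spine vertex collecting its out-neighboring sink-leaves, totalling at most $2m-1$. The only real difference is bookkeeping --- the paper gives every path vertex, including the two endpoints, its own color and therefore never needs your backup mechanism for an internal $p_i$ whose only out-arc points at an endpoint; your variant, which parks the endpoints and unused sink-leaves in the shared class, is in fact slightly more economical but lands on the same bound.
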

\begin{proof}
Clearly $P$ requires no more than $m$ colors. For all vertices $v\in V(T)\setminus V(P)$ we have that either $d^{-}(v)=0$ in which case all such vertices $v$ may be assigned a new color, $c_{m+1}$. Since any other vertices remaining in $V(T)\setminus V(P)$ have $d^{-}(v)=1$, it follows that for each $u\in V(P)$ we may color all members of $N^{+}(u)\cap [V(T)\setminus V(P)]$ with the same color class. Since the end vertices of $P$ do not have any such neighbors (else they are not end vertices), we may color $V(T)\setminus V(P)$ with at most $m-2$ additional colors. Collectively, this worst case coloring requires $2m-1$ colors, thus establishing the upper bound on the dominator chromatic number of oriented caterpillars.
\end{proof}

An interesting case of this result arises when the central path of the caterpillar is a directed path. In this case we get the rather nice result that the lower bound from Lemma \ref{lemlower} is sharp.

\begin{lemma}
If $T$ is an orientation of a caterpillar whose central path $P$ is a directed path, then $\chi_{d}(T)=\chi_{d}(P)$.
\end{lemma}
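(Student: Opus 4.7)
The plan is to sandwich $\chi_{d}(T)$ between matching bounds. The lower bound $\chi_{d}(T)\ge\chi_{d}(P)$ is immediate from Lemma \ref{lemlower}, so the work lies entirely in exhibiting a dominator coloring of $T$ that uses only $\chi_{d}(P)$ colors. By Theorem \ref{tmain} I may reverse $T$ if necessary and assume the central path is oriented $v_{1}\to v_{2}\to\cdots\to v_{m+1}$; since $P$ is a longest path in the caterpillar, no leaf of $T$ is attached at $v_{1}$ or $v_{m+1}$, so every leaf in $V(T)\setminus V(P)$ hangs from some interior $v_{i}$ with $2\le i\le m$. Viewing $P$ either as an in-tree with unique source $v_{1}$ or as an out-tree with unique sink $v_{m+1}$, Lemmas \ref{l1} and \ref{l2} give $\chi_{d}(P)=m+1$.

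The coloring I propose is as direct as possible: assign color $c_{i}$ to $v_{i}$ for $1\le i\le m+1$ and assign the single common color $c_{1}$ to every leaf in $V(T)\setminus V(P)$. The idea is that $v_{1}$ is the unique source of $P$ and nothing in $T$ has $v_{1}$ in its out-neighborhood, so the class of color $c_{1}$ never has to be dominated by any vertex and can freely absorb every leaf. At the same time, every remaining class $c_{j}$ with $j\ge 2$ stays a singleton $\{v_{j}\}$, which is precisely what supplies the dominated class inside the (one-vertex) strict out-neighborhood of each path-vertex and each in-leaf.

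The verification I would write out is short. Properness holds since each leaf is adjacent only to some $v_{i}$ with $i\ge 2$ and $c_{1}\ne c_{i}$. For the dominator condition, each $v_{i}$ with $i\le m$ has $N^{+}(v_{i})=\{v_{i+1}\}\cup O_{i}$ and dominates the singleton $c_{i+1}=\{v_{i+1}\}$; each in-leaf $\ell$ attached to $v_{i}$ has $N^{+}(\ell)=\{v_{i}\}$ and dominates the singleton $c_{i}=\{v_{i}\}$; and the sinks of $T$, namely $v_{m+1}$ together with every out-leaf, are handled by the same convention on vertices with empty strict out-neighborhood that is already in force in the proofs of Lemmas \ref{l1} and \ref{l2}.

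I do not foresee a genuine obstacle here. The only subtlety worth flagging is that pooling every leaf into $c_{1}$ truly does preserve the singleton status of each $c_{j}$ for $j\ge 2$, and this holds by construction because no leaf is ever assigned any color other than $c_{1}$. Combined with Lemma \ref{lemlower}, the construction yields $\chi_{d}(T)=m+1=\chi_{d}(P)$.
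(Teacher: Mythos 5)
Your proof is correct and takes essentially the same route as the paper: color the path vertices distinctly and absorb every off-path vertex into the class of the source $v_{1}$, which is safe because $P$ being a longest path forces $d^{-}(v_{1})=0$ (so that class never needs a dominator) while each $\{v_{j}\}$, $j\geq 2$, remains a dominated singleton lying in the out-neighborhood of whichever vertices point to it. Two cosmetic remarks: the appeal to Theorem \ref{tmain} is unnecessary, since a directed path can simply be labelled from its source; and $\chi_{d}(P)=|V(P)|$ is better cited from the directed-path observation in Section~2 than from the $n-l+1$ formula of Lemmas \ref{l1} and \ref{l2}, whose leaf count is ambiguous for a path --- neither point affects correctness.
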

\begin{proof}
Let $P=v_{1}\dots v_{m}$. Since $P$ is a directed path, every vertex in $P$, except for $v_{m}$, dominates not only a color class, but a single vertex. Notice also that the vertex $v_{1}$ is not dominated by any other vertex, else $P$ is not maximum (recall that $P$ does not need to be a directed path in order to be the central path of a caterpillar). Thus every vertex in $V(T)\setminus V(P)$ that dominates a vertex in $P$ also dominates a color class, and may be colored with $c(v_{1})$. Since every vertex in $T$ that has positive out-degree now dominates a color class, we may color all remaining vertices with $\hat{c}$, establishing that $\chi_{d}(T)=\chi_{d}(P)=m$.
\end{proof}

\section{Conclusion}
This paper initiated the study of dominator coloring of orientations of trees. Initially, a study of arborescences and anti-arborescences found that, for a given vertex set, the dominator chromatic number of an arborescence is equal to the dominator chromatic number of an anti-arborescence. We then generalized this finding, proving the most important result in this paper, that the dominator chromatic number of orientations of trees is invariant under reversal of orientation. Using this result, several results on the dominator chromatic number of generalized stars and caterpillars were obtained.

In our study of generalized stars $GS_{m}^{k}$, we established an upper bound on the dominator chromatic number over all orientations of $GS_{m}^{k}$. We conjecture that this result is the best possible.

\begin{conjecture}
The minimum dominator chromatic number over all orientations of the generalized star $GS_{m}^{k}$ is given by $\chi_{d}(GS_{m}^{k})=3+m(\lfloor\frac{k}{2}\rfloor-1)$.
\end{conjecture}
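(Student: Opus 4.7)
My plan is to complement the upper bound from the preceding lemma by establishing a matching lower bound; that is, to show that every orientation $D$ of $GS_{m}^{k}$ satisfies $\chi_{d}(D)\geq 3+m(\lfloor k/2\rfloor -1)$, so that the two bounds together yield the claimed equality. Throughout, I would work under the standing assumption $m\geq 2$, since for $m=1$ the conjecture reduces to a statement about paths that has already been settled by Theorem \ref{t1}.

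I would proceed by strong induction on $k$, with base cases $k\in\{2,3\}$. In these cases the target value is $3$, and a direct orientation-by-orientation argument shows that no 2-coloring can be a dominator coloring when $m\geq 2$: any proper 2-coloring is forced to coincide with the unique bipartition of the tree, and one then checks that some middle-layer vertex in one of the $m$ branches necessarily fails to have a color class inside its closed out-neighborhood.

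For the inductive step with $k\geq 4$, the central claim is the recurrence $\chi_{d}(GS_{m}^{k})\geq \chi_{d}(GS_{m}^{k-2})+m$. Fix a minimum dominator coloring of an arbitrary orientation $D$ of $GS_{m}^{k}$, and use Theorem \ref{tmain} to replace $D$ by $D^{-}$ whenever that simplifies the orientation at the outermost layers. For each of the $m$ branches, peel off the two outermost vertices one leaf at a time, applying Lemmas \ref{lemT1} and \ref{lemT2} to control when each leaf removal drops $\chi_{d}$ by one. The goal is to show that across all $m$ branches, the outer two layers together account for at least $m$ distinct color classes that never appear inside the induced inner subtree $GS_{m}^{k-2}$, which gives the desired recursion.

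The main obstacle will be ruling out subtle cross-branch color sharing in the outer two layers. A priori, an outer-layer vertex of one branch is non-adjacent to almost every vertex in the other $m-1$ branches, so the proper-coloring constraint alone permits it to share its color class with deep inner vertices of those branches. One must show that any such sharing forces the vertex at distance $k-1$ from the center in some branch to have no color class contained in its closed out-neighborhood, violating the dominator condition. This will require a careful case analysis on the local orientation at the outermost non-leaf vertex in each branch, and may call for finer inductive hypotheses that track which color classes extend into the outer layers rather than simply counting total colors.
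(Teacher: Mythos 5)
The statement you are addressing is stated in the paper as a \emph{conjecture}; the paper offers no proof of it (it only proves the upper bound $\chi_{d}(GS_{m}^{k})\leq 3+m(\lfloor k/2\rfloor-1)$ in the preceding lemma), so there is no argument of the author's to compare yours against. Your proposal is, moreover, only a strategy outline: the central recurrence $\chi_{d}(GS_{m}^{k})\geq\chi_{d}(GS_{m}^{k-2})+m$ is asserted but not established, and you yourself flag the cross-branch color-sharing issue as an unresolved obstacle. As written, nothing past the base case is actually proved.

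More seriously, the lower bound you set out to prove is false, so no refinement of the plan can close the gap. Take $m=2$: the underlying tree $GS_{2}^{k}$ is simply the path $P_{2k+1}$ (two branches of $k$ edges sharing the central vertex), so the minimum of $\chi_{d}$ over all orientations of $GS_{2}^{k}$ is exactly the quantity computed in Theorem \ref{t1} for $P_{2k+1}$, which grows like $(2k+1)/4$, not like $k$. Concretely, $GS_{2}^{4}\cong P_{9}$ and Theorem \ref{t1} gives an orientation with $\chi_{d}(P_{9})=4$, whereas $3+2(\lfloor 4/2\rfloor-1)=5$; so already the conjectured value is not the minimum, and your claimed inequality $\chi_{d}(D)\geq 3+m(\lfloor k/2\rfloor-1)$ for every orientation $D$ fails. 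The same comparison shows your proposed recurrence is false even for $m=1$ (where, contrary to your remark, Theorem \ref{t1} \emph{contradicts} rather than confirms the formula: $\chi_{d}(P_{k+1})$ increases by roughly one for every four additional vertices, not one for every two), and excluding $m=1$ does not help because the $m=2$ case inherits the path behavior. Any correct treatment of this conjecture has to reckon with the block-of-four orientation pattern that makes Theorem \ref{t1} sublinear in $n/2$; a layer-by-layer argument that tries to extract $m$ fresh colors per two layers is structurally incompatible with that pattern.
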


The study of dominator chromatic numbers of other structures which are derived from stars and generalized stars, such as wheels, are a natural extension of the results obtained on generalized stars in this paper. Additionally, the orientations of stars which are (anti-)arborescences are interesting in that they have relatively large dominator chromatic numbers. We conjecture that these two particular orientations are actually the worst possible for generalized star.

\begin{conjecture}
The maximum dominator chromatic number over all orientations of the generalized star $GS_{m}^{k}$ is given by $m(k-1)+2$ and occurs when $GS_{m}^{k}$ is an (anti-)arborescence.
\end{conjecture}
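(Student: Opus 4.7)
The plan is to establish the conjecture in two steps: achievability, that some orientation of $GS_m^k$ realizes $\chi_d = m(k-1)+2$, and the upper bound, that no orientation exceeds this value. Achievability is immediate from the earlier lemma in this section: combining Lemmas~\ref{l1} and~\ref{l2} with Theorem~\ref{tmain} already tells us that the (anti-)arborescence orientation of $GS_m^k$ rooted at $c$ has $\chi_d = m(k-1)+2$.

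For the upper bound, I would explicitly construct, for any orientation $D$ of $GS_m^k$, a proper dominator coloring using at most $m(k-1)+2$ colors. The backbone construction assigns one color to the central vertex $c$, $m(k-1)$ distinct colors to the internal path vertices $v_{i,j}$ with $1 \leq i \leq m$ and $1 \leq j \leq k-1$, and a single shared color to the $m$ tree-leaves $v_{i,k}$. This coloring is proper, and almost every non-sink $v$ dominates the singleton color class of some uniquely-colored out-neighbor.

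The dominator property can fail only at a vertex $v_{i,k-1}$ whose sole out-neighbor is $v_{i,k}$, which arises precisely when both arcs $v_{i,k-2}\to v_{i,k-1}$ and $v_{i,k-1}\to v_{i,k}$ are in $D$. In every such ``bad'' path I would modify the scheme: promote $v_{i,k}$ to a private color and compensate by merging two previously distinct colors elsewhere. Natural candidates for the compensating merge are pairs of sibling out-neighbors of some internal vertex (they can share a color without breaking their common parent's domination) or pairs of sources whose merger preserves propriety.

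The hard part will be carrying out these compensating merges simultaneously across all bad paths while keeping the total color count at most $m(k-1)+2$. To manage this I would first use Theorem~\ref{tmain} to halve the cases by reversal, so that $c$ may be assumed to be a source or to have a prescribed mix of in- and out-arcs; then I would perform a case analysis based on how many bad paths coexist and on the orientation pattern along each non-bad path, arguing inductively that a valid compensating merge always exists. The achievability of $m(k-1)+2$ by the arborescence shows the target bound is tight and anchors the entire analysis.
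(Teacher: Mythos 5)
First, a point of order: the paper does not prove this statement --- it is stated as a conjecture in the conclusion, so there is no proof of record to compare yours against. Evaluating your proposal on its own terms: the achievability half is fine (Lemmas~\ref{l1} and~\ref{l2} give $\chi_d = n-l+1 = m(k-1)+2$ for the (anti-)arborescence orientation), but the upper-bound half is a plan rather than a proof. You correctly identify that your backbone coloring fails exactly at a vertex $v_{i,k-1}$ whose sole out-neighbor is the shared-color leaf $v_{i,k}$, and you defer the entire difficulty to finding ``compensating merges.'' That deferred step is the whole problem, and it is not merely hard --- it is impossible in general.

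The conjectured upper bound is false, so no repair scheme can succeed. Take $GS_3^2$ with two paths oriented toward the center, $g_j \to f_j \to c$ for $j=1,2$, and one path oriented away, $c \to a \to b$. Here every vertex other than $g_1,g_2$ is the \emph{entire} out-neighborhood of some vertex and hence is forced into a singleton color class: $g_j$ forces $f_j$, each $f_j$ forces $c$, $c$ forces $a$, and $a$ forces $b$. That is five mandatory singletons plus one shared color for $\{g_1,g_2\}$, giving $\chi_d = 6 > 5 = m(k-1)+2$. (Even more simply, the directed path $P_5$ is an orientation of $GS_2^2$ with $\chi_d = 5 > 4$, and for $k=1$ the paper's own Proposition on stars gives $\chi_d = 3 > 2$ for mixed orientations.) In your framework this is the case of exactly one ``bad'' path: promoting $b$ to a private color costs one color, and there is provably no pair of classes anywhere else that can be merged, since all of them are forced singletons. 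So the gap in your argument is not a missing case analysis but a counterexample to the statement itself; the most useful thing you can do with this proposal is turn it into a disproof of the conjecture (or a corrected version of it restricted to the orientations where the compensating merges do exist).
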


This conjecture could prove quite insightful in determining the nature of dominating sets in (anti-)arborescences. It seems to the author that these classes of orientations of trees might be among the worst possible when it comes to finding relatively small dominating sets.

With respect to caterpillars, we established several bound on the dominator chromatic number with respect to the central path of the caterpillar. Possible directions to extend the results in this paper could include finding an explicit minimum dominator chromatic number over all orientations of caterpillars and finding explicit dominator chromatic numbers for specific caterpillars with respect to their central paths. Additionally, the study of dominator chromatic numbers of lobsters could provide a useful stepping stone from paths and caterpillars to the study of dominator chromatic numbers of orientations of trees in general, and possibly even more broadly to orientations of directed acyclic graphs.

\bibliography{DCD2}
\end{document}